\newcolumntype{x}[1]{>{\centering\arraybackslash}p{#1}}
\theoremstyle{plain} { \theorembodyfont{\rmfamily}

\newtheorem{remark}{Remark}

}
\newtheorem{theorem}{Theorem}
\newcommand{\cF}{{\cal F}}
\newcommand{\cG}{{\cal G}}
\newcommand{\cT}{{\cal T}}
\newcommand{\cM}{{\cal M}}
\newcommand{\bF}{\mathbb{F}}
\newcommand{\bG}{\mathbb{G}}
\newcommand{\bZ}{\mathbb{Z}}
\newcommand{\bN}{\mathbb{N}}
\newcommand{\bR}{\mathbb{R}}
\newcommand{\bP}{\mathbb{P}}
\newcommand{\bxi}{\boldsymbol{\xi}}
\newcommand{\bdeta}{\boldsymbol{\eta}}
\newcommand{\prob}{\mathbb{P}}
\newcommand{\expect}{\mathbb{E}}
\newcommand{\vecr}{\mathbf{r}}
\newcommand{\SEP}{\mathrm{SEP}}
\newcommand{\hit}{\mathcal{H}}
\begin{document}
\title{Two explicit Skorokhod embeddings for simple symmetric random walk\thanks{Xue Dong He thanks research funds from Columbia University and The Chinese University of Hong Kong. Jan Ob{\l}{\'o}j thanks CUHK for hosting him as a visitor in March 2013 and gratefully acknowledges support from the ERC (grant no.\ 335421) under the EU's $7^{\textrm{th}}$ FP, and from St John's College in Oxford. The research of Xun Yu Zhou was supported by grants from Columbia University, the Oxford--Man Institute of Quantitative Finance and Oxford--Nie Financial Big Data Lab. The authors are grateful to anonymous reviewers for their comments and in particular express thanks to one reviewer who has suggested the current method of proof of Theorem \ref{thm:reformulate} to replace our original one and made comments which led to our Remark \ref{rk:nonuniqueAY}.}
}

\author{Xue Dong He\thanks{Corresponding Author. Department of Systems Engineering and Engineering Management, The Chinese University of Hong Kong, Hong Kong. Email: \texttt{xdhe@se.cuhk.edu.hk}.}
\and Sang Hu\thanks{School of Science and Engineering, The Chinese University of Hong Kong, Shenzhen, China 518172.  Email: \texttt{husang@cuhk.edu.cn}.}
\and
Jan Ob{\l}{\'o}j\thanks{Mathematical Institute and St John's College, University of Oxford, Oxford, UK. Email: \texttt{Jan.Obloj@maths.ox.ac.uk}.}
\and Xun Yu Zhou\thanks{Department of Industrial Engineering and Operations Research, Columbia University, New York, New York 10027.  Email: \texttt{xz2574@columbia.edu}.}}

\maketitle

\begin{abstract}

Motivated by problems in behavioural finance, we provide two explicit constructions of a randomized stopping time which embeds a given centered distribution $\mu$ on integers into a simple symmetric random walk in a uniformly integrable manner.
Our first construction has a simple Markovian structure: at each step, we stop if an independent coin with a state-dependent bias returns tails. Our second construction is a discrete analogue of the celebrated Az\'ema--Yor solution and requires independent coin tosses only when excursions away from maximum breach predefined levels. Further, this construction maximizes the distribution of the stopped running maximum among all uniformly integrable embeddings of $\mu$.

\medskip

{\bf Keywords:} Skorokhod embedding; simple symmetric random walk; randomized stopping time; Az\'ema-Yor stopping time.

\end{abstract}

\section{Introduction}\label{se:Introduction}

We contribute to the literature on Skorokhod embedding problem (SEP). The SEP, in general, refers to the problem of finding a stopping time $\tau$, such that a given stochastic process $X$, when stopped, has the prescribed distribution $\mu$: $X_\tau\sim \mu$. When such a $\tau$ exists we say that $\tau$ embeds $\mu$ into $X$. This problem was first formulated and solved by \citet{Skorokhod:65} when $X$ is a  standard Brownian motion. It has remained an active field of study ever since, see \cite{Obloj2004:SkorokhodEmbedding} for a survey, and has recently seen a revived interest thanks to an intimate connection with the Martingale Optimal Transport, see \cite{BeiglbockCoxHuesmann} and the references therein.

In this paper, we consider the SEP for the simple symmetric random walk.
Our interest arose from a casino gambling model of \citet{Barberis2012:Casino} in which the gambler's cumulative gain and loss process is modeled by a random walk. The gambler has to decide when to stop gambling and her preferences are given by cumulative prospect theory \citep{TverskyKahneman1992:CPT}. Such preferences lead to dynamic inconsistency, so this optimal stopping problem cannot be solved by the classical Snell envelop and dynamic programming approaches. By applying the Skorokhod embedding result we obtain here, \citet{HeEtal2014:StoppingStrategies} convert the optimal stopping problem into an infinite-dimensional optimization problem, find the (pre-committed) optimal stopping time, and study the gambler's behavior in the casino.

To discuss our results, let us first introduce some notation. We let $X=S=(S_t:t\geq 0)$ be a  simple symmetric random walk defined on a filtered probability space $(\Omega,\cF,\bF,\prob)$, where $\bF=(\cF_t)_{t\geq 0}$. We work in discrete time so here, and elsewhere, $\{t\geq 0\}$ denotes $t\in \{0,1,2,\ldots\}$. We let $\cT(\bF)$ be the set of $\bF$--stopping times and say that $\tau\in \cF(\bF)$ is uniformly integrable (UI) if the stopped process $(S_{t\wedge \tau}:t\geq 0)$ is UI. Here, and more generally when considering martingale, one typically restricts the attention to UI stopping times to avoid trivialities and to obtain solutions which are of interest and use. We let $\bZ$ denote the set of integers and  $\cM_0(\bZ)$ the set of probability measures on $\bZ$ which admit finite first moment and are centered.
Our prime interest is in stopping times which solve the SEP:
\begin{equation}
\label{eq:SEPdef}
\SEP(\bF,\mu):= \left\{\tau\in \cT(\bF): S_\tau\sim \mu \textrm{ and }\tau \textrm{ is UI}\right\}.
\end{equation}
Clearly if $\SEP(\bF,\mu)\neq \emptyset$ then $\mu\in \cM_0(\bZ)$. For embeddings in a Brownian motion, the analogue of \eqref{eq:SEPdef} has a solution if and only if $\mu\in \cM_0(\bR)$. However, in the present setup, the reverse implication depends on the filtration $\bF$. If we consider the natural filtration $\bF^S=(\cF^S_t:t\geq 0)$ where $\cF^S_t=\sigma(S_u:u\leq t)$, then
\citet{CoxObloj2008:ClassesofMeasures} showed that the set of probability measures $\mu$ on $\mathbb{Z}$ for which $\SEP(\bF^S,\mu)\neq \emptyset$ is a fractal subset of $\cM_0(\bZ)$. In contrast, \citet{Rost1971:Markoff} and \citet{Dinges1974:Stopping}
showed how to solve the SEP using randomized stopping times, so that if $\bF$ is rich enough then $\SEP(\bF,\mu)\neq \emptyset$ for all $\mu\in \cM_0(\bZ)$.
We note also that the introduction of external randomness is natural from the point of view of applications. In the casino model of \citet{Barberis2012:Casino} mentioned above, \citet{HeHuOblojZhou2016:Randomization} and \citet{HendersonHobsonTse14} showed that the gambler is strictly better off when she uses extra randomness, such as a coin toss, in her stopping strategy instead of relying on $\tau\in \cT(\bF^S)$. Similarly, randomized stopping times are useful in solving optimal stopping problems, see e.g.\ \citet{BelomestnyKraetschmer2014:OptimalStopping} and \citet{HeEtal2014:StoppingStrategies}.

Our contribution is to give two new constructions of $\tau\in \SEP(\bF,\mu)$ with certain desirable optimality properties. Our first construction, in Section \ref{se:RanodmizedPI} below, has minimal (Markovian) dependence property: a decision to stop only depends on the current state of $S$ and an independent coin toss. The coins are suitably biased, with state dependent probabilities, which can be readily computed using an explicit algorithm we provide. Such a strategy is easy to compute and easy to implement, which is important for applications, e.g.\ to justify its use by economic agents, see \citet{HeHuOblojZhou2016:Randomization}. We also link our construction to the embedding in \citet{CoxHobsonObloj:11} and show that the former corresponds to a suitable projection of the latter.
Our second construction, presented in Section \ref{se:AYlikeStoppingTimes}, is a discrete--time analogue of the \citet{AzemaYor1979} embedding. It is also explicit and its first appeal lies in the fact that it only stops when the loss, relative to the last maximum, gets too large. It also has an inherent probabilistic interest: it maximizes $\bP(\max_{t\leq \tau}S_t\geq x)$, simultaneously for all $x\in \bR$, among all $\tau\in \cT(\bF)$, attaining the classical \citet{BlackwellDubins1963:AConverse} bound for $x\in \bN$. We conclude the paper with explicit examples worked out in Section \ref{se:Example}.

\section{Randomized Markovian Solution to the SEP}\label{se:RanodmizedPI}
To formalize our first construction, consider $\vecr=(r^x:x\in \bZ)\in [0,1]^{\bZ}$ and a family of Bernoulli random variables $\bxi=\{\xi_{t}^x\}_{t\ge 0,x\in\mathbb{Z}}$ with $\bP(\xi_{t}^x=0)=r^{x}=1-\bP(\xi_{t}^x=1)$, which are independent of each other and of $S$. Each $\xi_{t}^x$ stands for the outcome of a coin toss at time $t$ when $S_t=x$ with $1$ standing for heads and $0$ standing for tails. To such $\bxi$ we associate
\begin{align}\label{eq:StoppingTimeCoinToss}
\tau(\vecr):=\inf\{t\ge 0: \xi_{t}^{S_t}=0\},
\end{align}
which is a stopping time relative to $\bF^{S,\bxi}=(\cF^{S,\bxi}_t:t\geq 0)$ where $\cF^{S,\bxi}_t:=\sigma(S_s,\xi_{s}^{S_s},s\leq t)$. The decision to stop at time $t$ only depends on the state $S_t$ and an independent coin toss. Accordingly, we  refer to $\tau(\vecr)$ as a {\em randomized Markovian stopping time}. It is clear however that the distribution of $S_{\tau(\vecr)}$ is a function of $\vecr$ and does not depend on the particular choice of the random variable $\bxi$.
The following shows that such stopping times allow us to solve \eqref{eq:SEPdef} for all $\mu\in \cM_0(\bZ)$.
\begin{theorem}\label{thm:reformulate}
	For any $\mu \in \mathcal{M}_0(\mathbb{Z})$, there exists $\vecr_\mu\in [0,1]^\bZ$ such that $\tau(\vecr_\mu)$ solves $\SEP(\bF^{S,\bxi},\mu)$.
\end{theorem}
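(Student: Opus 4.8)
The plan is to write $\vecr_\mu$ in closed form and then check, in turn, that $\tau:=\tau(\vecr_\mu)<\infty$ a.s., that $S_\tau\sim\mu$, and that $(S_{t\wedge\tau})_{t\ge0}$ is UI. Put $C(x):=\sum_{y\in\bZ}(y-x)^+\mu(\{y\})$ and $P(x):=\sum_{y\in\bZ}(x-y)^+\mu(\{y\})$; both are finite convex functions with second difference $C(x+1)-2C(x)+C(x-1)=\mu(\{x\})=P(x+1)-2P(x)+P(x-1)$, and centredness of $\mu$ gives the put--call parity $C(x)-P(x)=-x$. Define
\[
G(x):=\begin{cases}\mu(\{x\})+2C(x),&x\ge0,\\ \mu(\{x\})+2P(x),&x\le0\end{cases}
\qquad\Bigl(=\mu(\{x\})+\textstyle\sum_{y}|y-x|\,\mu(\{y\})-|x|\Bigr),
\]
and let $\cK$ be the set of integers lying between $\min$ and $\max$ of $\mathrm{supp}\,\mu$. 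Then $G>0$ precisely on $\cK$ (equal to $\mu(\{\cdot\})$ at the extreme points of $\mathrm{supp}\,\mu$), $C(x)\to0$ as $x\to+\infty$ and $P(x)\to0$ as $x\to-\infty$ so $G\to0$ at $\pm\infty$, and $0\le\mu(\{x\})\le G(x)$ on $\cK$. Set $r_x:=\mu(\{x\})/G(x)$ for $x\in\cK$ and $r_x:=1$ otherwise, so $\vecr_\mu\in[0,1]^\bZ$ and $r_xG(x)=\mu(\{x\})$ for every $x$. For finiteness, condition on the trajectory of $S$: the coins $\{\xi_{t,S_t}\}_{t\ge0}$ are then independent, so $\bP(\tau=\infty\mid S)=\prod_{t\ge0}(1-r_{S_t})$; picking any $x_0\in\mathrm{supp}\,\mu$ we have $r_{x_0}>0$, and $S$ visits $x_0$ infinitely often a.s.\ by recurrence, so the product is $0$ a.s.\ and $\tau<\infty$ a.s. Thus $\nu:=\mathrm{Law}(S_\tau)$ is a probability measure and, with $\hat G(x):=\bE_0[\#\{0\le t\le\tau:S_t=x\}]$, one has $\nu(\{x\})=r_x\hat G(x)$.

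For the embedding, a direct computation using the second-difference identities for $C$ and $P$, the relation $C-P=-\mathrm{id}$, and $r_xG(x)=\mu(\{x\})$ shows that $G$ satisfies the Green (balance) equation of the killed walk,
\[
2g(x)-(1-r_{x-1})g(x-1)-(1-r_{x+1})g(x+1)=2\,\ind[x=0],\qquad x\in\bZ.
\]
Writing this as $g=\ind_{\{0\}}+Kg$ with the nonnegative substochastic kernel $K(x,y)=\tfrac12(1-r_y)\ind[|x-y|=1]$, one has that $\hat G$ is the minimal nonnegative solution of this equation (it is the Green function of the killed walk); hence iterating $G=\ind_{\{0\}}+KG$ and using $K\ge0$, $G\ge0$ gives $G\ge\sum_{t\ge0}K^t\ind_{\{0\}}=\hat G$, and in particular $\hat G$ is finite. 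Mass conservation now closes the argument: $\sum_x r_x\hat G(x)=\bP(\tau<\infty)=1=\sum_x\mu(\{x\})=\sum_x r_xG(x)$, and since $r_x\bigl(G(x)-\hat G(x)\bigr)\ge0$ for every $x$ this forces $r_x\hat G(x)=r_xG(x)$, i.e.\ $\nu=\mu$.

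For uniform integrability, note that on $\{t<\tau\}$ we have $S_{(t+1)\wedge\tau}=S_{t+1}$, so the second-difference identity for $C$ makes $M_t:=C(S_{t\wedge\tau})-C(0)-\tfrac12\sum_{s<t\wedge\tau}\mu(\{S_s\})$ an honest ($L^1$) martingale. Taking expectations and letting $t\to\infty$ --- monotone convergence for the compensator, together with $\hat G=G$ and $S_\tau\sim\mu$ from the previous step, and put--call parity and centredness once more --- yields $\bE[C(S_{t\wedge\tau})]\uparrow\bE[C(S_\tau)]<\infty$. Since $C\ge0$ and $C(S_{t\wedge\tau})\to C(S_\tau)$ a.s., Scheff\'e's lemma gives $C(S_{t\wedge\tau})\to C(S_\tau)$ in $L^1$; and because $0\le C(x)-x^-\le\sum_y y^+\mu(\{y\})$ is bounded, this is equivalent to $(S_{t\wedge\tau})^-\to(S_\tau)^-$ in $L^1$. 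The mirror argument with $P$ (using $0\le P(x)-x^+\le\sum_y y^-\mu(\{y\})$) gives $(S_{t\wedge\tau})^+\to(S_\tau)^+$ in $L^1$, so $S_{t\wedge\tau}\to S_\tau$ in $L^1$, which is exactly uniform integrability of $(S_{t\wedge\tau})_{t\ge0}$.

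The crux of the argument is the identity $\hat G=G$ in the embedding step when $\mathrm{supp}\,\mu$ is unbounded: when $\mathrm{supp}\,\mu$ is finite the killed walk stays inside $[\min\mathrm{supp}\,\mu,\max\mathrm{supp}\,\mu]$, $\bE\tau<\infty$, and the balance equation is a finite nonsingular linear system, but in general one must rule out occupation mass escaping to infinity, which is precisely what the minimal-solution characterisation supplies. Once this is secured, the apparent difficulty in the UI step --- that $(S_{t\wedge\tau})$ is only known to have a first moment, with no $L^2$- or $L\log L$-bound available --- causes no trouble, because $C$ and $P$ lie within a bounded distance of $x^-$ and $x^+$ and carry explicit compensators.
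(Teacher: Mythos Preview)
Your proof is correct and takes a genuinely different route from the paper's. The paper proceeds in two stages: for bounded support it argues abstractly that the set $\cR_\mu$ of ``under-embedding'' vectors is closed under pointwise maxima and that $\vecr\mapsto\mathrm{Law}(S_{\tau(\vecr)})$ is continuous, whence a maximal element $\rmax\in\cR_\mu$ exists and is shown to embed $\mu$; for unbounded support it truncates $\mu$ to $\mu_n$, shows the corresponding $\vecr^n$ decrease componentwise, and passes to the limit via a coupling of the coins, with UI obtained from Scheff\'e and martingale convergence. The closed form $r_i=p_i/(p_i+g_i)$ is derived only afterwards, in Section~\ref{subse:ConstructionRandPI}, as a corollary of existence. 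You invert this: the formula is your \emph{definition}, $G=p+g$ is recognised as a potential built from $C$ and $P$, the Green equation $g=\ind_{\{0\}}+Kg$ is checked by hand, and the standard minimal-solution bound $\hat G\le G$ combined with mass conservation (using only $\tau<\infty$ a.s., which follows from recurrence) forces $\nu=\mu$ with no truncation at all. Your UI step via Tanaka-type compensated martingales for $C$ and $P$ is likewise self-contained: the limiting compensator is computed from $\hat G=G$ on $\mathrm{supp}\,\mu$ --- which is all that $\sum_{s<\tau}\mu(\{S_s\})$ sees; your unqualified ``$\hat G=G$'' is a slight overstatement but harmless here --- rather than from optional stopping, so there is no circularity. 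Your approach is more direct and handles bounded and unbounded support uniformly; the paper's, while less explicit, yields the additional structural fact that $\vecr_\mu$ is \emph{maximal} in $\cR_\mu$, something your argument does not reveal.
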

\subsection{Proof of Theorem \ref{thm:reformulate}}\label{sec:proofMarkovian}
We establish the theorem by embedding our setup into a Brownian setting and then using Theorem 5.1 in \citet{CoxHobsonObloj:11}.  We reserve $t$ for the discrete time parameter and use $u\in [0,\infty)$ for the continuous time parameter. We assume our probability space $(\Omega,\cF,\prob)$ supports a standard Brownian motion $B=(B_u)_{u\geq 0}$. We let $\bG=(\cG_u)_{u\geq 0}$ denote its natural filtration taken right-continuous and complete and $L_u^y$ denote its local time at time $u\geq 0$ and level $y\in \bR$. Recall that $\mu\in \mathcal{M}_0(\mathbb{Z})$ is fixed.
\citet{CoxHobsonObloj:11} show that there exists a measure $m$ on $\mathbb{R}$ such that, for a Poisson random measure $\Delta^m$ with intensity $du \times m(dx)$, independent of $B$,
$$T^m=\inf\{u\geq 0: \Delta^m(R_u)\geq 1 \},\quad \text{where }R_u=\{(s,y): L_u^y>s\},$$
is minimal and embeds $\mu$, i.e., $B_{T^m}\sim \mu$ and $(B_{T^m\land u}: u \geq 0)$ is uniformly integrable, the latter being equivalent to minimality of $T^m$, see \citet[Sec.~8]{Obloj2004:SkorokhodEmbedding}. Moreover, by the construction in \citet{CoxHobsonObloj:11}, $m(I^c)=+\infty$ for any interval $I$ that contains the support of $\mu$. For $\mu \in \mathcal{M}_0(\mathbb{Z})$ it follows that $m$ is a measure on $\bZ$: $m(dy)=\sum_{x\in \bZ}m^x\delta_{x}(dy)$ ; otherwise, $\Delta^m(R_u)$ can possibly hit $1$ when the local time $L_u^y$ accumulates at certain non-integer level $y$, in which case $B_{T^m}$ takes value $y$. In addition, 
if $\mu$ has support bounded from above, i.e., $\mu([\bar{x},\infty))=\mu(\{\bar{x}\})>0$ for a certain $\bar x\in\mathbb{Z}$, then $m^{\bar{x}}=\infty$ and $T^m\leq \inf\{u\geq 0: B_u \geq \bar{x}\}$, with analogous expressions when the support is bounded from below.
We note that $N^x_u= \Delta^m(\{(s,x): s\leq u\})$ is a Poisson process with parameter $m^x$, $x\in \bZ$ and its first arrival time $\rho^x=\inf\{u\ge 0: N^x_u\geq 1\}$ is exponentially distributed with parameter $m^x$. We can now rewrite the embedding time as
$$T^m=\inf\{u\geq 0: L_u^x\geq \rho^x \text{ for some }x\in \bZ\}.$$
Consider now consecutive hitting times of integers
$$ \sigma_0=0,\quad \sigma_t=\inf\left\{u\geq \sigma_{t-1}: B_u\in \bZ\setminus\{B_{\sigma_{t-1}}\}\right\},\quad t=1,2,\ldots,$$
and note that $X_t:= B_{\sigma_t}$ is a simple symmetric random walk. Recall that the measure $dL^x_u$ is supported on $\{u: B_u=x\}$. This implies a particularly simple structure of the stopping time $T^m$. First, note that $T^m\neq \sigma_t$ unless $m^{X_t}=\infty$. Then, let us describe $T^m$ conditionally on $T^m>\sigma_t$. In particular, $\rho^{X_t}>L^{X_t}_{\sigma_t}$ and $\rho^{X_t}-L^{X_t}_{\sigma_t}$ is again exponential with parameter $m^{X_t}$. $T^m$ happens in $(\sigma_t,\sigma_{t+1})$ if and only if the local time accumulated at level $X_t$ is greater than this exponential variable. Clearly this event depends on the past only through the value of $X_t$. More formally, considering the new Brownian motion $B^t_u=B_{u+\sigma_t}-B_{\sigma_t}$, $u\geq 0$, and denoting its local time in zero as $L^{(0,t)}_u$ we see that $T^m\in (\sigma_t,\sigma_{t+1})$ if and only if $\{L^{(0,t)}_{\sigma_{t+1}}\geq \rho^{X_t}-L^{X_t}_{\sigma_t}\}$ which, conditionally on $X_t=x$, is independent of $\cF_{\sigma_t}$ and has probability which only depends on $x$. Further, in this case $B_{T^m}=X_t$.
If we let $\tau_{B,X}=t$ on $\{\sigma_t\leq T^m<\sigma_{t+1}\}$ then it follows that $\tau_{B,X}$ is a stopping time relative to the natural filtration of $X$ enlarged with a suitable family of independent random variables, it has the precise structure of $\tau(\vecr)$ in \eqref{eq:StoppingTimeCoinToss}, embeds $\mu$ and is UI. More precisely, using the space homogeneity of Brownian motion, we can define the probabilities using just the local time in zero, and it follows that if we take
$$r^x=\bP(L^0_{\sigma_1}>\rho^x)$$
then $\tau(\vecr)\in \SEP(\bF^{S,\bxi},\mu)$ as required.

\begin{remark}\label{rk:maximalr}
 We note that by following the methodology in \citet{CoxHobsonObloj:11} one could write a direct proof of Theorem \ref{thm:reformulate}, albeit longer and more involved than the one above. In particular, it is insightful to point out that if $\mu$ has a finite support -- $\mu([\underline{x},\bar{x}])=1$ with $\mu(\{\underline{x}\}) > 0$, $\mu(\{\bar{x}\}) > 0$ for some $\underline{x}<0<\bar{x}$ -- then $\vecr$ as constructed above
 can be shown to be the maximal element in the set
 \begin{equation*}
\mathcal{R}_{\mu} = \{\vecr \in [0,1]^{\mathbb{Z}}: r^i=1 \text{ if }i\notin (\underline{x},\bar{x}) \text{ and } \mathbb{P}(S_{\tau(\vecr)} = i) \leq \mu(\{i\}) \text{ if } i \in (\underline{x},\bar{x}) \}.
\end{equation*}
\end{remark}

\subsection{Algorithmic computation of the stopping probabilities $\vecr_\mu$}
\label{subse:ConstructionRandPI}
In this section, we work under the assumptions of Theorem \ref{thm:reformulate} and provide an algorithmic method for computing $\{r^i\}_{i\in \mathbb{Z}}$ obtained therein.
We let $\tau=\tau(\vecr_\mu)$ and $g^i$ denote the expected number of visits of $S$ to state $i$ strictly before $\tau$, i.e.:
\begin{align}\label{eq:ProbReaching}
g^i:=\expect\left[\sum_{t=0}^{\tau-1}\mathbf{1}_{S_t=i}\right]=
\sum_{t=0}^\infty\mathbb{P}\left(\tau>t,S_{t} = i\right).
\end{align}
Denote $a^+:= \max\{a,0\}$. It is a matter of straightforward verification to check that for any $i\leq 0\leq j$ the processes
$$(i-S_t)^+-\frac12 \sum_{u=0}^{t-1}\mathbf{1}_{S_u=i},\quad (S_t-j)^+-\frac12 \sum_{u=0}^{t-1}\mathbf{1}_{S_u=j},\quad t\geq 0,$$
are martingales. To compute $g^i$, we then apply the optional sampling theorem at $\tau\land t$ and let $t\to \infty$. Using the fact that $\{S_{\tau\land t}\}$ is a UI family of random variables together with monotone convergence theorem, we deduce that
\begin{align}
g^i &= 2 \expect[(S_\tau-i)^+] = 2\sum_{k=i}^{+\infty} \prob(S_\tau \ge k+1) ,\quad i=0,1,2,\dots,\label{eq:GiPositive}\\
g^i &= 2 \expect[(i-S_\tau)^+] = 2\sum_{k=-\infty}^i \prob(S_\tau \le k-1),\quad i=0,-1,-2,\dots\label{eq:GiNegative}
\end{align}
Writing $p^i:= \mu(\{i\})=\prob(S_{\tau}=i)$, we now compute
\begin{align*}
\begin{split}
p^i &= \mathbb{P}\left(S_{\tau} = i\right)
=\sum_{t=0}^{\infty} \mathbb{P}\left(\tau = t, S_{t} = i\right)\\
& = \sum_{t=0}^{\infty} \mathbb{P}\left(\xi_{u,S_u} = 1, u=0,1,\dots, t-1,\xi_{t,S_t}=0, S_{t} = i\right)\textrm{, and by conditioning}\\
& = \sum_{t=0}^{\infty} \mathbb{P}\left(\xi_{u,S_u} = 1, u=0,1,\dots, t-1,S_{t} = i\right)r^i
\\
&= r^i \sum_{t=0}^{\infty} \mathbb{P}\left(\tau\ge t,S_{t} = i\right)
= r^i(g^i+p^i).
\end{split}
\end{align*}
Therefore, if $p^i+g^i>0$, we must have $r^{i} = \frac{p^{i}}{p^{i} + g^{i}}$. If $p^i>0$ and $g^i=0$, which is the case if and only if $i$ is on the boundaries of the support, we have $r^i=1$, i.e., we have to stop instantly. If $p^i+g^i=0$, then $p^i=g^i=0$ and this can only happen for states outside the boundaries of the support. In this case, we set $r^i=1$, which is consistent with the characterisation in Remark \ref{rk:maximalr}.
Thus $\vecr_\mu=(r^i)$ in Theorem \ref{thm:reformulate} is given by
\begin{align}\label{eq:Computeri}
r^{i} = \frac{p^{i}}{p^{i} + g^{i}}\mathbf 1_{\{p^{i} + g^{i}>0\}} +\mathbf 1_{\{p^{i} + g^{i}=0\}},\quad i\in \mathbb{Z},
\end{align}
where $p^i=\mu(\{i\})$ and $g^i$ can be calculated from \eqref{eq:GiPositive} and \eqref{eq:GiNegative}.
This can be seen as the equation on the bottom of page S22 in \cite{CoxHobsonObloj:11} specialised to our setup. While that equation is only argued heuristically therein, in our setup we can give it a rigorous meaning and proof.

\section{Randomized Az\'ema-Yor solution to the SEP}\label{se:AYlikeStoppingTimes}
Let us recall the celebrated Az\'ema-Yor solution to the SEP for a standard Brownian motion $(B_u:u\geq 0)$. As above, we reserve $t$ for the discrete time parameter and use $u\in [0,\infty)$ for the continuous time parameter. To a centered probability measure $\mu$ on $\mathbb{R}$ we associate its barycenter function
\begin{align}\label{eq:barycentrefunction}
\psi_\mu(x):=\frac{1}{\bar \mu(x)}\int_{[x,+\infty)}y\mu(dy),\quad x\in \mathbb{R},
\end{align}
where $\bar \mu(x):=\mu\big([x,+\infty)\big)$ and $\psi_\mu(x):=x$ for $x$ such that $\mu([x,+\infty))=0$. We let $b_\mu$ denote the right-continuous inverse of $\psi_\mu$; i.e., $b_\mu(y):=\sup\{x:\psi_{\mu}(x)\le y\}$, $y\ge 0$. Then
\begin{align}\label{eq:AYcontdef}
T^{AY}_\mu:=\inf\{u\ge 0: B_u\leq b_\mu(B^*_u)\},\quad \textrm{ where } B^*_u:= \sup_{s\leq u} B_s,
\end{align}
satisfies $B_{T^{AY}_\mu}\sim \mu$ and $(B_{u\wedge T^{AY}_\mu}:u \geq 0)$ is UI. Furthermore, for any other such solution $\widetilde T$ to the SEP, and any $x\geq 0$, we have $\bP(B^*_{\widetilde T}\geq x)\leq \bP(B^*_{T^{AY}_\mu}\geq x)=\bar\mu^{HL}(x)$; hence $T^{AY}_\mu$ maximizes the distribution of the maximum in the stochastic order. Here $\bar \mu^{HL}$ is the Hardy-Littlewood transform of $\mu$ and the bound $\bar\mu^{HL}(x)$ is due to \cite{BlackwellDubins1963:AConverse}, and has been extensively studied since; see e.g. \cite{CarraroElKarouiObloj:09} for details.

A direct transcription of the Az\'ema-Yor embedding to the context of a simple symmetric random walk only works for measures $\mu\in\cM_0(\bZ)$ for which $\psi_\mu(x)\in \bN$ for all $x\in\bR$, which is a restrictive condition; see \citet{CoxObloj2008:ClassesofMeasures} for details.
For a general $\mu\in \cM_0(\bZ)$ we should seek instead to emulate the structure of the stopping time: the process stops when its drawdown hits a certain level, i.e., when $B_u^*-B_u\ge B_u^*-b_\mu(B_u^*)$, or when it reaches a new maximum at which time a new maximum drawdown level is set, whichever comes first. Only in general, we expect to use an independent randomization when deciding to stop or to continue an excursion away from the maximum. Surprisingly, this can be done explicitly and the resulting stopping time maximizes stochastically the distribution of the running maximum of the stopped random walk among all solutions in \eqref{eq:SEPdef}.

Before we state the theorem we need to introduce some notation. Let $\mu \in \mathcal{M}_0(\mathbb{Z})$ and denote $\bar x:=\sup\{n\in\mathbb Z:\mu(\{n\})>0\}$ and $\underline{x}:=\inf\{n\in\mathbb{Z}:\mu(\{n\})>0\}$ the bounds of the support of $\mu$.
The barycentre function  $\psi_\mu$ is piece-wise constant on $(-\infty,\bar x]$ with jumps in atoms of $\mu$, is non-decreasing, left-continuous with $\psi_\mu(x)>x$ for $x<\bar x$, $\psi_\mu(x)=x$ for $x\in[\bar x,+\infty)$, and $\psi_\mu(-\infty)=0$. The inverse function $b_\mu$ is right-continuous, non-decreasing with $b_\mu(0)=\underline{x}$, and is integer valued on $(0,\bar x)$. Further, $b_\mu(y)<y$ for $y<\bar x$ and $b_\mu(y)=y$ for $y\in[\bar x,+\infty)$; in particular $b_\mu(n)\leq n-1$ for $n\in \bZ\cap [0,\overline{x})$. Moreover, for any $n\in \bZ\cap [\underline{x},\overline{x}]$, $\mu(\{n\})>0$ if and only if $n$ is in the range of $b_\mu$; consequently, $\mu(\{y\})=0$ for any $y$ that is not in the range of $b_\mu$.

For each $1\le n<\bar x$, $\{b_\mu(y):y\in[n,n+1]\} \cap (-\infty, n]$ is a nonempty set of finitely many integers which we rank and denote as
$x^n_1>x^n_2>\dots>x^n_{m_n+1}$. Similarly, we rank $\{b_\mu(y):y\in[0,1]\}$, which is a nonempty set of finitely many integers $x^0_1>x^0_2>\dots > x^0_{m_0+1}$ if $b_\mu(0)>-\infty$ and a set of countably many integers $x^0_1>x^0_2>\dots$ otherwise. Then, for each $1\le n< \bar x$, $x^{n-1}_{1} = x^{n}_{m_{n}+1}=b_\mu(n)\le n-1$. Note that we may have $m_n=0$, in which case $x^{n-1}_1=x^n_{m_n+1}=x^n_1$.
For each $1\le n<\bar x$ and for $n=0$ when $\underline{x}>-\infty$, define
\begin{align}
\begin{split}\label{eq:DefineRho}
\Gamma^n:&=\bar \mu\left(x^{n}_{m_n+1}\right)\frac{\psi_{\mu}(x^{n}_{m_n+1})-x^{n}_{m_n+1}}{n-x^{n}_{m_n+1}},\\
g^n_k:&=\frac{n+1-x^{n}_{k}}{\Gamma^n}\mu(\{x^n_k\}),\quad k=2,3,\dots, m_n,\\
g^n_{m_n+1}:&=\frac{n+1-x^{n}_{m_n+1}}{\Gamma^n}\left[\mu(\{x^{n}_{m_n+1}\}) - \bar \mu\left(x^{n}_{m_n+1}\right)\frac{n-\psi_\mu(x^{n}_{m_n+1})}{n-x^{n}_{m_n+1}}\right],\\
f^n_{m_n+1}:&=0, \quad f^n_{k} = f^n_{k+1}+g^n_{k+1},\quad k=1,2,\dots, m_n,\quad f^n_0:=1.
\end{split}
\end{align}
When $\underline{x}=-\infty$, define
\begin{align}\label{eq:DefineRho0}
g^0_k = (1-x^{0}_{k})\mu(\{x^0_k\}),\; k\ge 2,\quad f^0_k = \sum_{i=k+1}^\infty g^0_i,\; k\ge  1,\quad f^0_0:=1.
\end{align}
Then, as we will see in the proof of Theorem \ref{th:SkorokhodembeddingAYstoppingtime}, for each $1\le n<\bar x$ and for $n=0$ when $\underline{x}>-\infty$, $\rho^n_k:=1-(f^n_k/f^n_{k-1})$ is in $[0,1)$ for $k=1,\dots, m_n$ and we let $\rho^n_{m_n+1}:=1$; when $\underline{x}=-\infty$, $\rho^0_k:=1-(f^0_k/f^0_{k-1})$ is in $(0,1)$ for each $k\ge 1$ and we set $m_0+1:=+\infty$. Let $\bdeta=(\eta^n_k: 0 \leq n<\bar x, k\in \mathbb{Z}\cap [1, m_n+1])$ be a family of mutually independent Bernoulli random variables, independent of $S$, with $\bP(\eta^n_k=0)=\rho^n_k=1-\bP(\eta^n_k=1)$. We let $S_t^*:=\sup_{r\leq t}S_r$ and define the enlarged filtration $\bF^{S,\bdeta}$ via $\cF_t^{S,\bdeta}:=\sigma(S_u,\eta^{S^*_u}_k:k\in \mathbb{Z}\cap [1, m_{S^*_u}+1], u\leq t)$.

We are now ready to define our Az\'ema--Yor embedding for $S$. It is an $\bF^{S,\bdeta}$--stopping time which, in analogy to \eqref{eq:AYcontdef}, stops when an excursion away from the maximum breaches a given level.
However, since the maximum only takes integer values, we emulate the behaviour of $B_{u\land T^{AY}_\mu}$ between hitting times of two consecutive integers in an averaged manner, using independent randomization.
Specifically, if we let $\hit_n:=\inf\{t\geq 0: S_t=n\}$ then after $\hit_n$ but before $\hit_{n+1}$ we may stop at each of $x^n_1>x^n_2>\ldots>x^n_{m_n}$ depending on the independent coin tosses $(\eta^n_k)$, while we stop a.s.\ if we hit $x^n_{m_n+1}$. If we first hit $n+1$ then a new set of stopping levels is set. Finally, we stop upon hitting $\bar x$.
\begin{theorem}\label{th:SkorokhodembeddingAYstoppingtime}
	Let $\mu \in \mathcal{M}_0(\mathbb{Z})$ and $\tau^{AY}_\mu$ be given by
	\begin{equation*}
	\tau^{AY}_\mu:= \inf\left\{t\geq 0: S_t\leq x^{S^*_t}_k \textrm{ and }\eta^{S^*_t}_k=0 \textrm{ for some } k\in \mathbb{Z}\cap [1, m_n+1]\right\}\land \hit_{\bar x}.
	\end{equation*}
	Then $\tau^{AY}_\mu\in \SEP(\bF^{S,\bdeta},\mu)$ and
	for any $\sigma\in \SEP(\bF,\mu)$
	\begin{equation}\label{eq:AYoptimality}
	\bP(S^*_\sigma \geq n) \leq \bP(S^*_{\tau^{AY}_\mu} \geq n) = \bar \mu(b_\mu(n)) \frac{\psi_\mu(b_\mu(n)) - b_\mu(n)}{n-b_\mu(n)}
	,\quad n\in \bN,
	\end{equation}
	with the convention $\frac{0}{0}=1$.
\end{theorem}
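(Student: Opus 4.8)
The plan is to prove the two parts separately: that $\tau:=\tau^{AY}_\mu$ embeds $\mu$ into $S$ in a uniformly integrable way, and that it maximises the law of $S^*$ — the first by a strong-Markov recursion over the levels of the running maximum, the second by the classical Doob--Lagrangian estimate, which the recursion then renders tight. For the embedding, set $\hit_n=\inf\{t\ge0:S_t=n\}$ and call $[\hit_n,\hit_{n+1}\wedge\tau)$ ``regime $n$''; since $S$ moves by unit steps, $\{S^*_\tau\ge n\}=\{\hit_n\le\tau\}$. On $\{\hit_n<\tau\}$ the walk restarts afresh from $n$, independently of $\bdeta$, and descends through $x^n_1>x^n_2>\cdots$ in order, so, as $\rho^n_{m_n+1}=1$, the index $\kappa_n:=\min\{k:\eta^n_k=0\}$ is a.s.\ finite, independent of $S$, with $\bP(\kappa_n=k)=g^n_k$ (using $\rho^n_k=g^n_k/f^n_{k-1}$ and $f^n_{k-1}=f^n_k+g^n_k$); in regime $n$ we stop exactly when the walk first reaches $x^n_{\kappa_n}$, if that occurs before $n+1$, which by gambler's ruin has conditional probability $1/(n+1-x^n_k)$ given $\kappa_n=k$. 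Hence $\bP(S_\tau=x^n_k,\,S^*_\tau=n\mid S^*_\tau\ge n)=g^n_k/(n+1-x^n_k)$ and $\bP(S^*_\tau\ge n+1\mid S^*_\tau\ge n)=1-\sum_{k=1}^{m_n+1}g^n_k/(n+1-x^n_k)$.

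Substituting \eqref{eq:DefineRho} (or \eqref{eq:DefineRho0} when $\underline x=-\infty$) I would verify the single identity
\[
\Gamma^n\Bigl(1-\sum_{k=1}^{m_n+1}\frac{g^n_k}{n+1-x^n_k}\Bigr)=\Gamma^{n+1},
\]
the decisive cancellations being $\Gamma^n+\bar\mu(b_\mu(n))\tfrac{n-\psi_\mu(b_\mu(n))}{n-b_\mu(n)}=\bar\mu(b_\mu(n))$ together with $\Gamma^n g^n_1=\bar\mu(x^n_1)\bigl(n+1-\psi_\mu(x^n_1)\bigr)$ and $x^n_1=b_\mu(n+1)$; as $\Gamma^0=1$ (from $\psi_\mu(\underline x)=0$ by centering if $\underline x>-\infty$, and from $f^0_0=1$ otherwise), induction gives $\bP(S^*_\tau\ge n)=\Gamma^n$ for all $n$, which is both the a.s.\ finiteness input ($\Gamma^n\to0$ if $\bar x=+\infty$, while $\tau\le\hit_{\bar x}$ if $\bar x<+\infty$) and the explicit value in \eqref{eq:AYoptimality} (with $\tfrac00=1$ covering $n\ge\bar x$). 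To get $S_\tau\sim\mu$, fix an atom site $j$ and sum $\Gamma^n g^n_k/(n+1-j)$ over the finitely many contiguous regimes in which $j=x^n_k$: if $j\notin b_\mu(\bZ)$ it is an interior level of one regime and $g^n_k$ yields $\mu(\{j\})$ directly, whereas if $j=b_\mu(m)$ for $m$ in a block $\{n_-,\dots,n_+\}$ then $j$ is the bottom level of regimes $n_-,\dots,n_+$ and the top level of regime $n_--1$, and with $\Gamma^n=\bar\mu(j)\tfrac{\psi_\mu(j)-j}{n-j}$ on that block the partial fraction $\tfrac1{(n-j)(n+1-j)}=\tfrac1{n-j}-\tfrac1{n+1-j}$ telescopes the sum to $\mu(\{j\})$ (the boundary term being $\Gamma^{n_--1}g^{n_--1}_1=\bar\mu(j)(n_--\psi_\mu(j))$); since $\mu$ vanishes off $b_\mu(\bZ)$, this exhausts the atoms. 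Uniform integrability of $(S_{t\wedge\tau})$ I would obtain as in the proof of Theorem~\ref{thm:reformulate}: $F_t=|S_t|-\sum_{j<t}\ind_{S_j=0}$ is a martingale with $\bE[F_{\tau\wedge t}]=0$, and the expected local time of $S$ at $0$ before $\tau$ is finite (only the finitely many regimes $n$ with $b_\mu(n)\le0$ can visit $0$, each contributing a finite Green's-function term), so $\bE|S_{\tau\wedge t}|\to\bE|S_\tau|$, and Scheffé's lemma gives $L^1$-convergence, hence UI.

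The maximality \eqref{eq:AYoptimality} is the standard Doob estimate. Fix $n\in\bN$ and $\sigma\in\SEP(\bF,\mu)$; then $\{S^*_\sigma\ge n\}=\{\hit_n\le\sigma\}$, and with $\rho:=\hit_n\wedge\sigma$ one has $S_\rho=n$ on that event, so for any integer $c<n$, optional stopping (valid since $\sigma$ is UI) and conditional Jensen give $\bE[(S_\sigma-c)^+\mid\cF_\rho]\ge(S_\rho-c)^+\ge(n-c)\ind_{\{S^*_\sigma\ge n\}}$, whence $\bP(S^*_\sigma\ge n)\le\bE[(S_\sigma-c)^+]/(n-c)=\sum_y(y-c)^+\mu(\{y\})/(n-c)$. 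Choosing $c=b_\mu(n)\in\bZ$, for which $\sum_y(y-b_\mu(n))^+\mu(\{y\})=\bar\mu(b_\mu(n))(\psi_\mu(b_\mu(n))-b_\mu(n))$, makes the right-hand side equal to $\Gamma^n=\bP(S^*_\tau\ge n)$; this proves the inequality and simultaneously shows that the Blackwell--Dubins infimum over $c$ is attained at $c=b_\mu(n)$, so no separate optimisation over $c$ is needed.

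The main obstacle is the bookkeeping in the embedding part rather than any isolated step: pinning down the one-regime identity $\Gamma^n(1-\sum_k g^n_k/(n+1-x^n_k))=\Gamma^{n+1}$ from \eqref{eq:DefineRho}; handling uniformly the ways an atom $j$ can present itself as a level $x^n_k$ (an interior level of a single regime versus the shared top/bottom of a block of ``flat'' regimes, with $m_n=0$ possible); and fitting the degenerate configurations $\underline x=-\infty$ and $\bar x=+\infty$ (infinitely many levels or regimes) into the same recursion. The optimality half, by contrast, is short and becomes sharp for free once the recursion supplies $\bP(S^*_\tau\ge n)=\Gamma^n$.
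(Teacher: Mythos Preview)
Your embedding and optimality arguments follow essentially the same route as the paper. The paper also runs an induction over levels of the running maximum (packaged as the three-line statement \eqref{eq:AYlikestoppingtime}), obtains $\bP(\tau\ge\hit_n)=\Gamma^n$ via the same strong-Markov/gambler's-ruin decomposition, and derives the bound in \eqref{eq:AYoptimality} from the same Doob-type inequality (the paper writes it as Doob's maximal equality plus a pointwise estimate, your conditional-Jensen formulation is equivalent). Your organisation differs slightly---you first isolate the one-step relation $\Gamma^n\bigl(1-\sum_k g^n_k/(n+1-x^n_k)\bigr)=\Gamma^{n+1}$ and then reconstruct each $\mu(\{j\})$ by summing over regimes, whereas the paper carries the full partial distribution of $S_\tau$ through the induction---but this is bookkeeping rather than a different idea. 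One small inaccuracy: $x^n_1=\min(b_\mu(n+1),n)$ in general, not $b_\mu(n+1)$; they differ when $n+1=\bar x$, which affects the edge case you gloss over.

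Your UI argument, however, has a genuine gap. You correctly argue that the expected local time of $S$ at $0$ before $\tau$ is finite, which gives $L:=\lim_t\bE|S_{\tau\wedge t}|<\infty$. But finiteness of $L$ does \emph{not} yield $\bE|S_{\tau\wedge t}|\to\bE|S_\tau|$: take $\tau=\hit_1$, where the expected number of visits to $0$ before $\hit_1$ is $2$ while $\bE|S_{\hit_1}|=1$, so Scheff\'e fails and indeed $(S_{t\wedge\hit_1})$ is not UI. The analogous step in the proof of Theorem~\ref{thm:reformulate} works because there one has an increasing sequence of \emph{already-UI} stopping times $\tau(\vecr^n)\nearrow\tau(\vecr)$, which supplies the matching upper bound on $\bE|S_{\tau(\vecr^n)}|$; the sequence $\tau\wedge t$ has no such property. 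The paper instead uses the Az\'ema--Gundy--Yor criterion $K\,\bP(\sup_t|S_{\tau\wedge t}|\ge K)\to0$: since you already have $\bP(S^*_\tau\ge n)=\Gamma^n$ explicitly, it suffices to show $n\Gamma^n\to0$ (a short estimate using $\psi_\mu(b_\mu(n)+)>n$ and integrability of $\mu$) and, for the lower tail, to bound $n\,\bP(\inf_t S_{\tau\wedge t}\le -n)$ via the explicit tail $f^0_{i_n}\to0$ in regime $0$. That is the missing ingredient in your sketch.
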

The optimality property in \eqref{eq:AYoptimality} is analogous to the optimality of $T^{AY}_\mu$ in a Brownian setup, as described above and the bound in \eqref{eq:AYoptimality} coincides with $\bar \mu^{HL}(n)$.
\begin{remark}
Note that by considering our solution for $(-S_t)_{t\geq 0}$ we obtain a reversed Az\'ema--Yor solution which stops when the maximum drawup since the time of the historical minimum hits certain levels. It follows from \eqref{eq:AYoptimality} that such embedding maximizes the distribution of the running minimum in the stochastic order.
\end{remark}
\begin{remark}\label{rk:nonuniqueAY}
 We do not claim that $\tau^{AY}_\mu$ is the only embedding which achieves the upper bound in \eqref{eq:AYoptimality}. Our construction inherits the main structural property of the Az\'ema-Yor embedding for $B$: when a new maximum is hit, a lower threshold is set (which may depend on an independent randomisation) and we either stop when this threshold is hit or else a new maximum is set. This, in effect, averages out the behaviour of $B_{u\land T^{AY}_\mu}$ between hitting times of two consecutive integers. Instead, we could consider averaging out only the behaviour of $B_{u\land T^{AY}_\mu}$ between the first hitting time of an integer $n$ and the minimum between the hitting time of $n+1$ and the return of the embedded walk to $n$. The resulting embedding would have the following structure: each time the random walk is at its maximum, $S_t=S^*_t=n$ a (randomized) threshold is set and the walk stops if it hits this threshold. If not, a new instance of the threshold level is drawn when the walk returns to $n$. This is iterated until the walk is stopped at the current threshold level or when $n+1$ is hit which changes the distribution of the threshold level. This embedding appears less natural for us, having the casino gambling motivation in mind, however it should share the optimality property of $\tau^{AY}_\mu$ in \eqref{eq:AYoptimality}.
\end{remark}

\subsection{Proof of Theorem \ref{th:SkorokhodembeddingAYstoppingtime}}
	It is straightforward to verify that all the conclusions of the theorem hold for the case in which $\mu(\{0\})=1$, so we assume in the following that $\mu(\{0\})<1$ and thus $\underline{x}<0<\bar x$.
	Throughout the proof we let $\tau=\tau^{AY}_\mu$ and recall that $\hit_j = \inf\{t\ge 0: S_t = j\}$.
	We first prove constructively that $\rho^n_i$'s are well defined and the constructed stopping time $\tau$ embeds $\mu$ into the random walk. Specifically, we argue by induction that for any $0\le j<\bar x$ the stopping time $\tau$ satisfies
	\begin{align}\label{eq:AYlikestoppingtime}
	\begin{cases}
	\mathbb{P}(S_{\tau} = y \text{ and } \tau < \hit_{j+1}) = \mu(\{y\}), &\text{ for } y < x^{j}_1\\
	\mathbb{P}(S_{\tau} = y \text{ and } \tau < \hit_{j+1}) = \bar {\mu}(x^j_1) \frac{j+1-\psi_{\mu}(x^{j}_1)}{j+1-x^{j}_1},  &\text{ for } y = x^j_1\\
	\mathbb{P}(S_{\tau} = y \text{ and } \tau < \hit_{j+1}) = 0,  &\text{ for } y > x^j_1.
	\end{cases}
	\end{align}
For clarity of the presentation, somewhat lengthy and technical, proof of the above equalities is relegated to Section \ref{sec:proof_keyrep} below.
	
	Next, we show that $S_\tau\sim\mu$. If $\bar x=+\infty$, by the construction of $\tau$, we have
	\begin{align*}
	\prob(\tau=+\infty)\le \lim_{n\rightarrow+\infty}\prob(\tau\ge \hit_{n}) = \lim_{n\rightarrow+\infty}{\bar {\mu}(x^{n}_{m_n+1})}\frac{\psi_{\mu}(x^{n}_{m_n+1})-x^{n}_{m_n+1}}{n-x^{n}_{m_n+1}}=0,
	\end{align*}
	since $\lim_{n\rightarrow+\infty}x^{n}_{m_n+1} = \lim_{n\rightarrow+\infty}b_\mu(n)=+\infty$, $\lim_{x\rightarrow +\infty}\bar \mu(x)=0$, and $\psi_\mu(x^{n}_{m_n+1})\in [x^{n}_{m_n+1},n)$. Consequently, $\tau<+\infty$ a.s. and for any $y\in\mathbb Z$, $\mathbb{P}(S_{\tau} = y) =\lim_{n\rightarrow+\infty}\mathbb{P}(S_{\tau} = y , \tau < \hit_{n}) = \mu(\{y\})$. If $\bar x<+\infty$, by definition, $\tau\le \hit_{\bar x}<\infty$ a.s. Moreover, because \eqref{eq:AYlikestoppingtime} is true for $j=\bar x-1$, we have $\prob(S_\tau = y) = \prob(S_\tau = y,\tau < \hit_{\bar x}) = \mu(\{y\})$
	for any $y<x^{\bar x-1}_1$. From the definition of $x^{\bar x-1}_1$, we conclude that $b_\mu$ does not take any values in $(x^{\bar x-1}_1,\bar x)$, so $\mu(\{n\})=0$ for any integer $n$ in this interval. Since $S_\tau\leq \bar x$, it remains to argue $\prob(S_\tau = x^{\bar x-1}_1) = \mu(\{x^{\bar x-1}_1\})$. This follows from \eqref{eq:AYlikestoppingtime} with $j=\bar x-1$:
	\begin{align*}
	\prob(&S_\tau = x^{\bar x-1}_1) = \prob(S_\tau = x^{\bar x-1}_1,\tau < \hit_{\bar x}) \\
	&= \bar \mu(x^{\bar x-1}_1)\frac{\bar x-\psi_\mu(x^{\bar x-1}_1)}{\bar x-x^{\bar x-1}_1}=\frac{\bar x  \bar \mu(x^{\bar x-1}_1) - \sum_{n\ge x^{\bar x-1}_1}n\mu(\{n\})}{\bar x-x^{\bar x-1}_1}\\
	&=  \frac{\bar x  \left( \mu(\{x^{\bar x-1}_1\})+\mu(\{\bar x\})\right) - \left(x^{\bar x-1}_1\mu(\{x^{\bar x-1}_1\})+\bar x \mu(\{\bar x\})\right)}{\bar x-x^{\bar x-1}_1}=\mu(\{x^{\bar x-1}_1\}),
	\end{align*}
	where the fourth equality follows because $\mu(\{n\})=0$ for any $n> x^{\bar x-1}_1$ and $n\neq \bar x$.
	
	To conclude that $\tau\in \SEP(\bF^{S,\bdeta},\mu)$ it remains to argue that $\tau$ is UI, which is equivalent to $\lim_{K\rightarrow +\infty} K\prob(\sup_{t \geq 0} |S_{\tau\wedge t}|\ge  K)=0$, see e.g. \citet{AzemaGundyYor:79}. We first show that $$\lim_{K\rightarrow +\infty} K\prob(\sup_{t \geq 0} S_{\tau\wedge t}\ge  K)=\lim_{K\rightarrow +\infty} K \prob(S^*_\tau\ge K)=0.$$ Because $\{S_{\tau\wedge t}\}$ never visits states outside any interval that contains the support of $\mu$, we only need to prove this when $\bar x=+\infty$, and hence $\lim_{y\rightarrow +\infty}b_\mu(y)=+\infty$, and taking $K\in \bN$.
	
	By \eqref{eq:ProbHittingn} and the construction of $\tau$, we see that for $n\in\bN$, $ n<\bar x$, we have
	\begin{align}
	&\prob(S^*_\tau\ge n)= \bP (\tau \geq \hit_n) = \bar \mu(x^n_{m_n+1}) \frac{\psi_\mu(x^n_{m_n+1}) - x^n_{m_n+1}}{n-x^n_{m_n+1}} \notag \\
	& = \bar \mu(b_\mu(n)) \frac{\psi_\mu(b_\mu(n)) - b_\mu(n)}{n-b_\mu(n)} = \bar \mu(b_\mu(n)+) \frac{\psi_\mu(b_\mu(n)+) - b_\mu(n)}{n-b_\mu(n)},\label{eq:AYdistrmax}
	\end{align}
where the last equality is the case because
\begin{align*}
  &\bar \mu(b_\mu(n)) \psi_\mu(b_\mu(n)) - \bar \mu(b_\mu(n)) b_\mu(n)\\
   &= \int_{[b_\mu(n),+\infty)}y\mu(dy)  -\mu(\{b_\mu(n)\})b_\mu(n)- \bar \mu(b_\mu(n)+) b_\mu(n)\\
   & = \int_{(b_\mu(n),+\infty)}y\mu(dy)- \bar \mu(b_\mu(n)+) b_\mu(n) = \bar \mu(b_\mu(n)+)\big(\psi_\mu(b_\mu(n)+) - b_\mu(n)\big);
\end{align*}
consequently,

	\begin{align*}
	n \prob(S^*_\tau\ge n) & = \bar \mu(b_\mu(n)+) \frac{\psi_\mu(b_\mu(n)+) - b_\mu(n)}{n-b_\mu(n)}n \\
	&= \bar \mu(b_\mu(n)+) \frac{\psi_\mu(b_\mu(n)+) - n}{n-b_\mu(n)}n + \bar \mu(b_\mu(n)+)n.
	\end{align*}
	For $a<c<d$ the function $y\to y(d-y)/(y-a)$ attains its maximum on $[c,d]$ in $y=c$. Taking
	$a=b_\mu(n)<c=\psi_\mu(b_\mu(n))\leq y=n<d=\psi_\mu(b_\mu(n)+)$, we can bound the first term by
	\begin{align*}
	& \bar \mu(b_\mu(n)+)  \frac{\psi_\mu(b_\mu(n)+) - n}{n-b_\mu(n)}n
	\leq \bar \mu(b_\mu(n)+) \frac{\psi_\mu(b_\mu(n)+) - \psi_\mu(b_\mu(n))}{\psi_\mu(b_\mu(n))-b_\mu(n)}\psi_\mu(b_\mu(n))\\
	& =\mu(\{b_\mu(n)\})\psi_\mu(b_\mu(n))
	\leq \bar \mu(b_\mu(n))\psi_\mu(b_\mu(n)) =\sum_{y\ge b_\mu(n)}y\mu(\{y\}),
	\end{align*}
	which goes to zero with $n\to \infty$ since $\mu\in \cM_0(\bZ)$. Similarly, $\psi_\mu(b_\mu(n)+)>n$ gives
	$$\bar\mu(b_\mu(n+))n\leq \bar\mu(b_\mu(n+))\psi_\mu(b_\mu(n)+)=\sum_{y>b_\mu(n)}y\mu(\{y\})\stackrel{n\to\infty}{\longrightarrow}0$$
	and we conclude that $n\bP(S^*_\tau\ge n)\to 0$ as $n\to \infty$.
	It remains to argue that 
	\begin{align*}
	\lim_{n\rightarrow +\infty} n\prob(\inf_{t \geq 0} S_{\tau \wedge t}\le -n)=0.
	\end{align*}
	This is trivial if $\underline{x}>-\infty$. Otherwise $b_\mu(0)=\underline{x}=-\infty$ and $x^0_i$'s are infinitely many. For $n\in\bN$, by the construction of $\tau$, $\inf_{t \geq 0} S_{t \wedge \tau}\le- n$ implies that $S$ visits $-n$ before hitting 1 and $S$ is not stopped at any $x^0_i>-n$. Denote the $i_n:=\sup\{i\ge 1|x^0_i> -n\}$ and note that $i_n\to\infty$ as $n\to \infty$. By construction, the probability that $S$ does not stop at $x^0_i$ given that $S$ reaches $x^0_i$ is $f^0_i/f^0_{i-1}$, $i\ge 1$. On the other hand, the probability that $S$ visits $-n$ before hitting 1 is $1/(n+1)$. Therefore, the probability that $S$ visits $-n$ before hitting 1 and $S$ is not stopped at any $x^0_i>-n$ is
	\begin{align*}
	\frac{1}{n+1}\prod_{i=1}^{i_n}f^0_i/f^0_{i-1} = \frac{1}{n+1}f^0_{i_n}.
	\end{align*}
	From \eqref{eq:DefineRho0}, $\lim_{k\rightarrow +\infty}f^0_k=0$ since $\mu$ has a finite first moment. Therefore,
	\begin{align*}
	\limsup_{n\rightarrow +\infty}n \prob(\inf_{t \geq 0} S_{\tau \wedge t}\le -n)\le \limsup_{n\rightarrow +\infty}n\frac{1}{n+1}f^0_{i_n}=0.
	\end{align*}
	The above concludes the proof of $\tau\in \SEP(\bF^{S,\bdeta},\mu)$.
	
	While \eqref{eq:AYoptimality} may be deduced from known bounds, as explained before, we provide a quick self--contained proof. Fix any $n\geq 1$ and $\sigma\in \SEP(\bF,\mu)$. When $\bar x<+\infty$, by UI, we have $ \prob(S^*_\tau\ge \bar x) =\prob(S_\tau= \bar x)=\mu(\{\bar x\}) =  P(S_\sigma=\bar x)= \prob(S^*_\sigma\ge \bar x) $ and $ \prob(S^*_\tau\ge n) = \prob(S^*_\sigma\ge n)=0$ for any $n>\bar x$.
	
	Next, by Doob's maximal equality and the UI of $\sigma$, $\mathbb{E}[(S_\sigma - n) {\bf 1}_{S^*_\sigma \geq n}] = 0$ and hence, for $k\leq n\in \bN$,
	\begin{align}
	0 & = \mathbb{E}[(S_\sigma - n) {\bf 1}_{S^*_\sigma \geq n} ]  = \mathbb{E}[(S_\sigma - n) {\bf 1}_{S_\sigma \geq k}] + \mathbb{E}[(S_\sigma - n) ({\bf 1}_{S^*_\sigma \geq n} - {\bf 1}_{S_\sigma \geq k} )] \notag\\
	& \leq \mathbb{E}[(S_\sigma - n) {\bf 1}_{S_\sigma \geq k}] + (k - n) \mathbb{E}[ {\bf 1}_{S^*_\sigma \geq n} {\bf 1}_{S_\sigma < k}] - (k - n) \mathbb{E}[{\bf 1}_{S^*_\sigma < n} {\bf 1}_{S_\sigma \geq k} ] \notag\\
	&  = \mathbb{E}[(S_\sigma - n) {\bf 1}_{S_\sigma \geq k}] + (k-n) \mathbb{E}[{\bf 1}_{S^*_\sigma \geq n} - {\bf 1}_{S_\sigma \geq k} ] \notag \\
	&= \mathbb{E}[(S_\sigma - k) {\bf 1}_{S_\sigma \geq k}]- (n-k) \prob(S^*_\sigma\geq n).\label{eq:DoobMaximal}
	\end{align}
	Considering $\bN\ni n<\overline{x}$ and $k=b_\mu(n)<n$, and recalling \eqref{eq:AYdistrmax},  we obtain
	\begin{align*}
	\prob(S^*_\sigma\geq n) &\le \frac{\mathbb{E}[(S_\sigma -b_\mu(n)) {\bf 1}_{S_\sigma \geq b_\mu(n)}]}{n-b_\mu(n)}
	= \frac{\bar \mu(b_\mu(n))\left(\psi_\mu(b_\mu(n)) - b_\mu(n)\right)}{n-b_\mu(n)}\\
	&= \prob(S^*_\tau\geq n).
	\end{align*}

\subsection{Proof of \eqref{eq:AYlikestoppingtime}}\label{sec:proof_keyrep}
	First, we show the inductive step: we prove that \eqref{eq:AYlikestoppingtime} holds for $j=n<\bar x$ given that it holds for $j=0,\dots, n-1$. Because \eqref{eq:AYlikestoppingtime} is true for $j=n-1$, we obtain
	\begin{align}
	\mathbb{P}(\tau \geq \hit_n) &= 1 - \mathbb{P}(\tau < \hit_n) = 1- \left(\sum_{y < x^{n-1}_1} \mu(\{y\})\right) - \bar {\mu}(x^{n-1}_1) \frac{n-\psi_{\mu}(x^{n-1}_1)}{n-x^{n-1}_1} \notag \\
	&=\bar {\mu}(x^{n-1}_1)- \bar {\mu}(x^{n-1}_1) \frac{n-\psi_{\mu}(x^{n-1}_1)}{n-x^{n-1}_1}  =  {\bar {\mu}(x^{n-1}_1)}\frac{\psi_{\mu}(x^{n-1}_1)-x^{n-1}_1}{n-x^{n-1}_1} \notag\\
	&=  {\bar {\mu}(x^{n}_{m_n+1})}\frac{\psi_{\mu}(x^{n}_{m_n+1})-x^{n}_{m_n+1}}{n-x^{n}_{m_n+1}} = \Gamma_n,\label{eq:ProbHittingn}
	\end{align}
	where $\Gamma_n$ is defined as in \eqref{eq:DefineRho}. Recalling that $x^{n}_{m_n+1}=b_\mu(n)\le n-1<\bar x$ and $\psi_\mu(x)<x$ for $x<\bar x$, we conclude that $\mathbb{P}(\tau \geq \hit_n)>0$.
	
	Consider first the case when $m_{n}=0$, so that $x^{n}_1 = x^{n}_{m_n+1}=x^{n-1}_1$
	 and $\tau$ stops if $x^n_1$ is hit between $\hit_n$ and $\hit_{n+1}$. Consequently,
	\begin{align*}
	&\mathbb{P}(S_{\tau} = x^{n}_1,\tau < \hit_{n+1})
	=\mathbb{P}(S_{\tau} = x^{n}_1 ,\tau < \hit_n)+ \mathbb{P}(S_{\tau} = x^{n}_1, \tau < \hit_{n+1} | \tau \geq \hit_n) \cdot \mathbb{P}(\tau \geq \hit_n)\\
	&=\mathbb{P}(S_{\tau} = x^{n-1}_1, \tau < \hit_n)+ \frac{1}{n+1-x^{n}_1} \cdot {\bar {\mu}(x^{n}_{m_n+1})}\frac{\psi_{\mu}(x^{n}_{m_n+1})-x^{n}_{m_n+1}}{n-x^{n}_{m_n+1}}\\
	&=\bar {\mu}(x^{n-1}_1) \frac{n-\psi_{\mu}(x^{n-1}_1)}{n-x^{n-1}_1}+ \frac{1}{n+1-x^{n}_1} \cdot {\bar {\mu}(x^{n}_{m_n+1})}\frac{\psi_{\mu}(x^{n}_{m_n+1})-x^{n}_{m_n+1}}{n-x^{n}_{m_n+1}}\\
	&= \bar {\mu}(x^n_1) \frac{n-\psi_{\mu}(x^n_1)}{n-x^n_1}+\frac{1}{n+1-x^{n}_1} \cdot \bar {\mu}(x^n_1) \frac{\psi_{\mu}(x^n_1)-x^n_1}{n-x^n_1}\\
	&= \bar {\mu}(x^{n}_1) \frac{n+1-\psi_{\mu}(x^{n}_1)}{n+1-x^{n}_1},\quad \textrm{ and we conclude that \eqref{eq:AYlikestoppingtime} holds for $j=n$.}
	\end{align*}
	
	Next, we consider the case in which $m_n\ge 1$.
	A direct calculation yields
	\begin{align*}
	&\mu(\{x^{n}_{m_n+1}\}) - \bar \mu(x^{n}_{m_n+1})\frac{n-\psi_\mu(x^{n}_{m_n+1})}{n-x^{n}_{m_n+1}}\\
	&= \frac{1}{n-x^{n}_{m_n+1}}\left[\sum_{y>x^n_{m_n+1}} y\mu(\{y\}) - n\mu((x^n_{m_n+1},+\infty))\right] \\
	&= \frac{\mu((x^n_{m_n+1},+\infty))}{n-x^{n}_{m_n+1}}\left[\psi_\mu(x^n_{m_n+1}+)-n\right]=\frac{\mu((x^n_{m_n+1},+\infty))}{n-x^{n}_{m_n+1}}\left[\psi_\mu(b_\mu(n)+)-n\right]> 0,
	\end{align*}
	where the inequality follows from $x^n_{m_n+1}=b_\mu(n) <n<\bar x$ and $\psi_\mu(b_\mu(y)+)>y$ for any $y<\bar x$. Consequently, $g_{m_n+1}>0$.
	On the other hand, because $\mu(\{y\})=0$ for any $y$ that is not in the range of $b_\mu$, we conclude that
	\begin{align*}
	&\sum_{k=2}^{m_n+1}(n+1-x^{n}_{k})\mu(\{x^n_k\})=\sum_{x^n_{m_n+1}\le y<x^{n}_1}(n+1-y)\mu(\{y\})\\
	&=(n+1)(\bar \mu(x^n_{m_n+1})-\bar \mu(x^n_{1})) - \left[\psi_\mu(x^n_{m_n+1})\bar \mu (x^n_{m_n+1}) - \psi_\mu(x^n_{1})\bar \mu (x^n_{1})\right]\\
	&= (n+1-\psi_\mu(x^n_{m_n+1}))\bar \mu(x^n_{m_n+1}) - (n+1-\psi_\mu(x^n_{1}))\bar \mu(x^n_{1}).
	\end{align*}
	Consequently, we have
	\begin{align*}
	f_1^n=&\sum_{k=2}^{m_n+1}g_k\\
	 = &\frac{1}{\Gamma_n}\left[\sum_{k=2}^{m_n+1}(n+1-x^{n}_{k})\mu(\{x^n_k\}) - (n+1-x^{n}_{m_n+1}) \bar \mu(x^{n}_{m_n+1})\frac{n-\psi_\mu(x^{n}_{m_n+1})}{n-x^{n}_{m_n+1}} \right]\\
	=&\frac{1}{\Gamma_n}\left[\bar\mu(x^n_{m_n+1})\frac{\psi_\mu(x^n_{m_n+1})-x^n_{m_n+1}}{n-x^n_{m_n+1}}-(n+1-\psi_\mu(x^n_{1}))\bar \mu(x^n_{1})\right]\\
	=& 1-\frac{(n+1-\psi_\mu(x^n_{1}))\bar \mu(x^n_{1})}{\prob(\tau\ge \hit_n)}\le 1,
	\end{align*}
	where the last inequality holds because $\psi_\mu(x^n_{1}) =  \psi_\mu\big(\min(b_\mu(n+1),n)\big)\le \psi_\mu\big(b_\mu(n+1)\big)\le n+1$.
	It follows, since $g^n_k>0,k=2,\dots,m_n+1$, that $f_k$ is strictly decreasing in $k=1,2,\dots, m_n$ with $f^n_{m_n}>0$ and $f^n_1\le 1$. Consequently, $\rho^n_k$ is well defined and $\rho^n_k\in[0,1)$, $k=1,\dots, m_n$. Recall $\rho^n_{m_n+1} = 1 = 1-(f^n_{m_n+1}/f^n_{m_n})$. Set $x^n_0:=n$. Then, for each $k=1,\dots, m_n+1$,
	\begin{align*}
	&\mathbb{P}(S_{\tau} = x^n_k , \hit_n \leq \tau < \hit_{n+1})
	= \mathbb{P}(S_{\tau} = x^n_k , \tau < \hit_{n+1} | \tau \geq \hit_n) \cdot \mathbb{P}(\tau \geq \hit_n)\\
	& =\left[\prod_{j=1}^{k-1} \left(\frac{n+1-x^{n}_{j-1}}{n+1-x^n_{j}}(1-\rho^n_{j})\right)\right]\frac{n+1-x^{n}_{k-1}}{n+1-x^n_{k}}\rho^n_{k}\mathbb{P}(\tau \geq \hit_n)\\
	& = \frac{1}{n+1-x^n_{k}}\left[\prod_{j=1}^{k-1} (1-\rho^n_{j})\right]\rho^n_{k}\mathbb{P}(\tau \geq \hit_n)
	= \frac{1}{n+1-x^n_{k}}(f^n_{k-1}-f^n_k)\mathbb{P}(\tau \geq \hit_n).
	\end{align*}
	Therefore, recalling the definition of $f^n_k$ and $g^n_k$,
	for $k=2,\dots, m_n$, we have $\mathbb{P}(S_{\tau} = x^n_k,\hit_n \leq \tau < \hit_{n+1}) = \mu(\{x^n_k\})$.
	Further, $\mathbb{P}(S_{\tau} = x^n_{m_n+1},\hit_n \leq \tau < \hit_{n+1}) = \mu(\{x^n_{m_n+1}\}) - \bar \mu(x^{n}_{m_n+1})\frac{n-\psi_\mu(x^{n}_{m_n+1})}{n-x^{n}_{m_n+1}}$ and $\mathbb{P}(S_{\tau} = x^n_1,\hit_n \leq \tau < \hit_{n+1}) = \frac{n+1-\psi_{\mu}(x^{n}_{1})}{n+1-x^n_{1}}\bar \mu(x^n_1)$
	and we verify that \eqref{eq:AYlikestoppingtime} holds for $j=n$.

	We move on to showing the inductive base step: we prove \eqref{eq:AYlikestoppingtime} holds for $j=0$. When $\underline{x}>-\infty$, $x^0_i$'s are finitely many and the proof is exactly as above. When $\underline{x}=-\infty$, by definition, $g^0_k>0$ because $x^0_k\le x^0_1 \le 0$ and $\mu(\{x^0_k\})>0$. Recalling that $\mu(\{y\})=0$ for any $y$ that is not in the range of $b_\mu$, we have
	\begin{align*}
	\sum_{k=2}^\infty g^0_k &= \sum_{k=2}^\infty (1-x^{0}_{k})\mu(\{x^0_k\}) = \sum_{y< x^0_1}(1-y)\mu(\{y\}) = 1-\bar \mu(x^0_1) - \sum_{y< x^0_1}y\mu(\{y\})\\
	& =1-\bar \mu(x^0_1) + \sum_{y\ge  x^0_1}y\mu(\{y\}) =  1- \bar \mu(x^0_1) \left(1-\psi_\mu(x^0_1)\right).
	\end{align*}
	Because $x^0_1= \min(b_\mu(1),0)$ and $\psi_\mu(b_\mu(y))\le y$ for any $y$, we conclude that $\psi_\mu(x^0_1) \le \psi_\mu(b_\mu(1))\le 1$. In addition, $\bar \mu(x^0_1)<1$, showing that $\sum_{k=2}^\infty g^0_k<1$. Therefore, $f^0_k$'s are well defined, positive, and strictly decreasing in $k$, and $f^0_1< 1$. Therefore, $\rho_k^0\in (0,1)$ for each $k\ge 1$. Following the same arguments as previously, one concludes that \eqref{eq:AYlikestoppingtime} holds for $j=0$.

	\section{Examples}\label{se:Example}
We end this paper with an explicit computation of our two embeddings for two examples.

\subsection{Optimal Gambling Strategy}
The first example is a measure $\mu$ arising naturally from the casino gambling model studied in \citet{HeEtal2014:StoppingStrategies}. Therein, a gambler whose preferences are represented by cumulative prospect theory \citep{TverskyKahneman1992:CPT} is repeatedly offered a fair bet in a casino and decides when to stop gambling and exit the casino. The optimal distribution of the gambler's gain and loss at the exit time is a certain $\mu\in \cM_0(\bZ)$, which may be characterised explicitly, see \citet[Theorem 2]{HeEtal2014:StoppingStrategies}. 	
	 With a set of reasonable model parameters\footnote{Specifically: $\alpha_+ = 0.6$, $\delta_+ = 0.7$, $\alpha_- = 0.8$, $\delta_- = 0.7$, and $\lambda = 1.05$.}, we obtain
	\begin{align}\label{eq:DistributionExample}
	\mu(\{n\})=
	\begin{cases}
	0.4465\times ((n^{0.6}-(n-1)^{0.6})^{\frac{10}{3}}-((n+1)^{0.6}-n^{0.6})^{\frac{10}{3}}),& n\ge 2,\\
	0.3297,  & n=1, \\
	0.6216, & n=-1,\\
	0, & \text{otherwise}.
	\end{cases}
	\end{align}
	We first exhibit the randomized Markovian stopping time $\tau(\vecr)$ of Theorem \ref{thm:reformulate}. Using the algorithm given in Section \ref{subse:ConstructionRandPI} we compute $r^i$, the probabilities of a coin tossed at $S_t=i$ turning up tails, for all $i\in \mathbb{Z}$:
	\begin{align*}
	r^1 &= 0.4040,\;  r^2 = 0.0600, \; r^3 = 0.0253, \; r^4 = 0.0140,\; r^5 = 0.0089,\\
	r^6 &= 0.0061,\; r^7 = 0.0045,\; r^8 = 0.0034,\; r^9 = 0.0027,\; r^{10} = 0.0021,\dots
	\end{align*}
	Note that $\mu(\{0\})=0$ and $\mu(\{n\})=0,n\le -2$; so one does not stop at 0 and must stop upon reaching $-1$. The stopping time $\tau(\vecr)$ is illustrated in the left pane of Figure \ref{fi:PIAY}: $S$ is represented by a recombining binomial tree. Black nodes stand for ``stop", white nodes stand for ``continue", and grey nodes stand for the cases in which a random coin is tossed and one stops if and only if the coin turns up tails. The probability that the random coin turns tails is shown on the top of each grey node.
	
	Next we follow Theorem \ref{th:SkorokhodembeddingAYstoppingtime} to construct a randomized Az\'ema-Yor stopping time $\tau_\mu^{AY}$ embedding $\mu$. To this end, we compute $x^n_k$'s and $\rho^n_k$'s, which stand for the drawdown levels that are set after reaching maximum $n$ and the probabilities that the coins tossed at these levels turn up tails, respectively:
	\begin{align*}
	m_{0} &= 0,\; x^0_1 = -1,\quad m_1=1,\; x^1_1=1,\; \rho^1_1=0.2704,\; x^1_2=-1,\\
	m_2 &= 0,\; x^2_1=1,\quad m_3=0,\; x^3_1=1,\quad m_4=0,\; x^4_1=1,\\
	m_5&=1,\; x^5_1=2,\;\rho^5_1=0.0049,\; x^5_2=1,\quad m_6=0,\; x^6_1=1,\dots
	\end{align*}
	The stopping time is then illustrated in the right pane of Figure \ref{fi:PIAY}: $S$ is represented by a non-recombining binomial tree. Again, black nodes stand for ``stop", white nodes stand for ``continue", and grey nodes stand for the cases in which a random coin is tossed and one stops if and only if the coin turns up tails. The probability that the random coin turns up tails is shown on the top of each grey node.
	
	By definition, $\tau(\vecr)$ is Markovian: at each time time $t$, the decision to stop depends only on the current state and an independent coin toss. However, to implement the strategy, one needs to toss a coin most of the times. In contrast $\tau_\mu^{AY}$ requires less independent coin tossing: e.g.\ in the first five periods at most one such a coin toss, but it is path-dependent. For instance, consider $t=3$ and $S_t=1$. If one reaches this node along the path from (0,0), through (1,1) and (2,2), and to (3,1), then she stops. If one reaches this node along the path from (0,0), through (1,1) and (2,0), and to (3,1), then she continues. Therefore, compared to the randomized Markovian strategy, the randomized Az\'ema-Yor strategy may involve less independent coin tosses but is typically path-dependent\footnote{Indeed, \cite{HeHuOblojZhou2016:Randomization}
showed that, theoretically, any path-dependent strategy is equivalent to a randomization of  Markovian strategies.} as it considers relative loss when deciding if to stop or not.
	
	\begin{figure}
		\centering\begin{minipage}[t]{0.40\textwidth}
			\hspace*{-0.5cm}\scalebox{0.53}[0.63]{\includegraphics{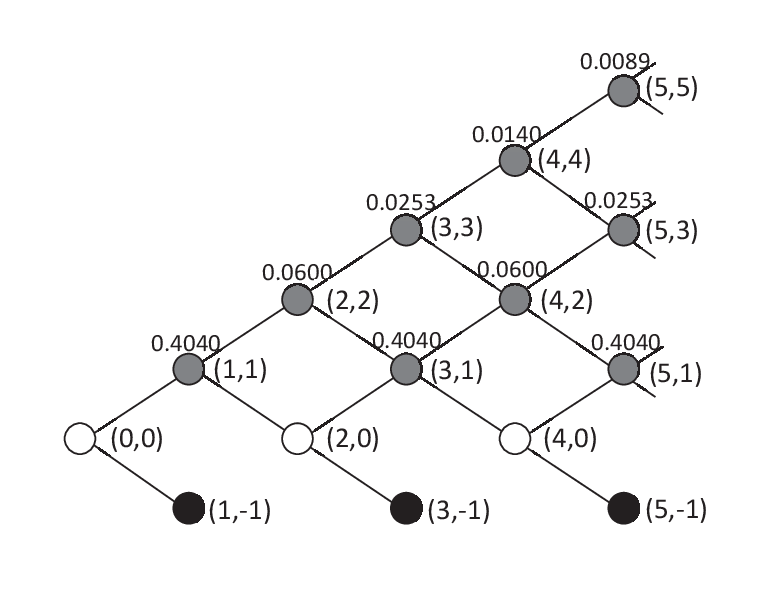}}
		\end{minipage}
		\hfill
		\begin{minipage}[t]{0.5\textwidth}
			\hspace*{-0.3cm}\scalebox{0.5}[0.4]{\includegraphics{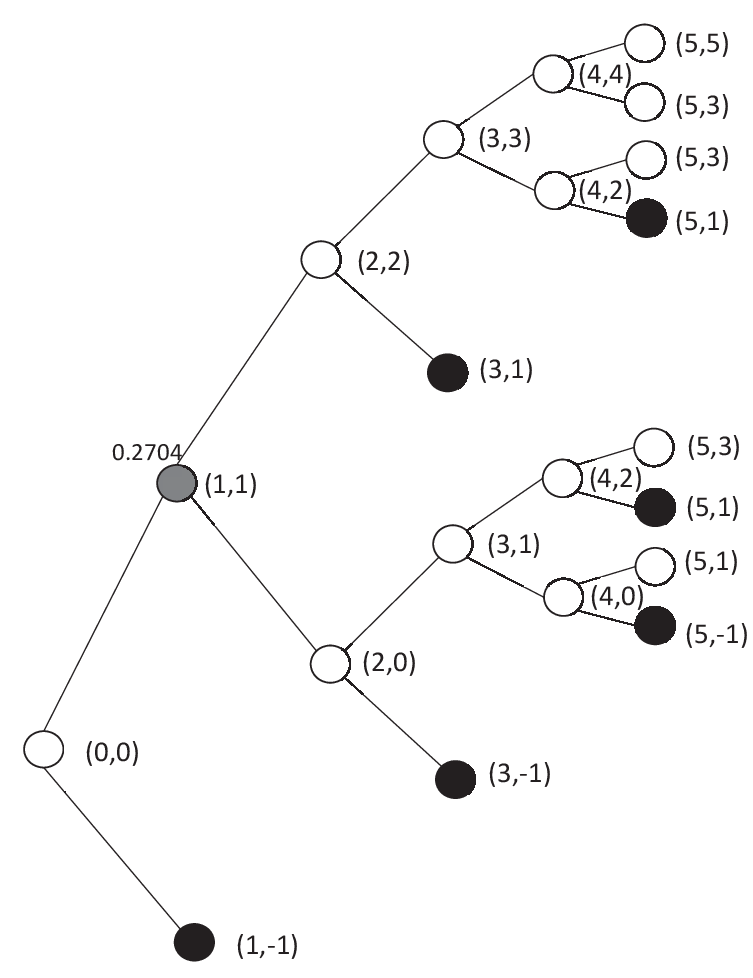}}
		\end{minipage}
		\caption{Randomized Markovian stopping time $\tau(\vecr)$ (left pane) and randomized Az\'ema-Yor stopping time $\tau_\mu^{AY}$ (right pane) embedding probability measure $\mu$ in \eqref{eq:DistributionExample} into the simple symmetric random walk $S$. Black nodes stand for ``stop", white nodes stand for ``continue", and grey nodes stand for the cases in which a random coin is tossed and one stops if and only if the coin turns up tails. Each node is marked on the right by a pair $(t,x)$ representing time $t$ and $S_t=x$. Each grey node is marked on the top by a number showing the probability that the random coin tossed at that node turns up tails.}\label{fi:PIAY}
	\end{figure}
	
\subsection{Mixed Geometric Measure}
The second example is a mixed geometric measure $\mu$ on $\bZ$ with
\begin{align}\label{eq:DistributionExamplegeometric}
\mu(\{n\})=
\begin{cases}
\gamma_+\left[ q_+(1-q_+)^{n-1}\right] , & n \geq 1,\\
1 - \gamma_+ - \gamma_-, & n=0,\\
\gamma_-\left[q_-(1-q_-)^{-n-1}\right]  , & n \leq -1,
\end{cases}
\end{align}
where $\gamma_\pm \ge 0$, $q_\pm\in (0,1)$, $\gamma_++\gamma_-\le 1$, and $\gamma_+/ q_+ = \gamma_- / q_-$ so that $\mu \in \cM_0(\bZ)$.

The randomized Markovian stopping time that embeds $\mu$ given by \eqref{eq:DistributionExamplegeometric} can be derived analytically. Indeed, according to the algorithm given in Section \ref{subse:ConstructionRandPI}, the probability of a coin tossed at $S_t=i$ turning up tails is
\begin{align*}
  r^i =
  \begin{cases}
  q_+^2/[(1-q_+)^2 + 1], & i \geq 1,\\
  (1 - \gamma_+ - \gamma_-)/[1 - \gamma_+ - \gamma_- + 2(\gamma_+/q_+)], &i = 0, \\
  q_-^2/[(1 - q_-)^2 + 1], & i \leq -1.
  \end{cases}
\end{align*}
The randomized Az\'ema-Yor stopping time that embeds $\mu$ given by \eqref{eq:DistributionExamplegeometric} can also be derived analytically. Because the formulae for $x^n_k$'s and $\rho^n_k$'s are tedious, we chose not to present them here. Instead, we illustrate the two embeddings in Figure \ref{fi:PIAYMG} by setting $q_+ = \gamma_+ = 5/12$ and $q_- = \gamma_- = 13/24$. As in the previous example, the randomized Az\'ema-Yor stopping time involves less randomization than the randomized Markovian stopping time at the cost of being path-dependent.

\begin{figure}
\centering\begin{minipage}[t]{0.45\textwidth}
\scalebox{0.45}[0.6]{\includegraphics{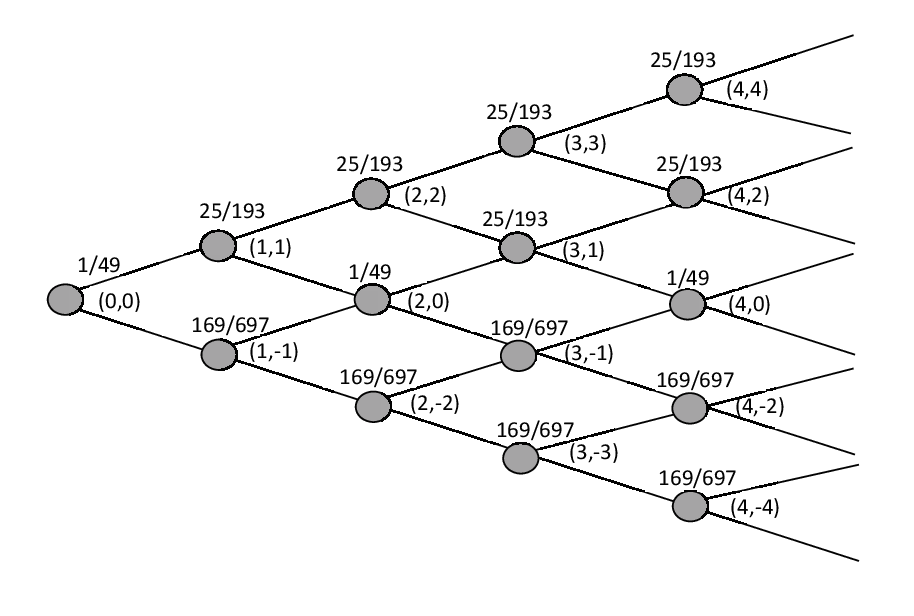}}
\end{minipage}
\hfill
\begin{minipage}[t]{0.45\textwidth}
\scalebox{0.65}[0.8]{\includegraphics{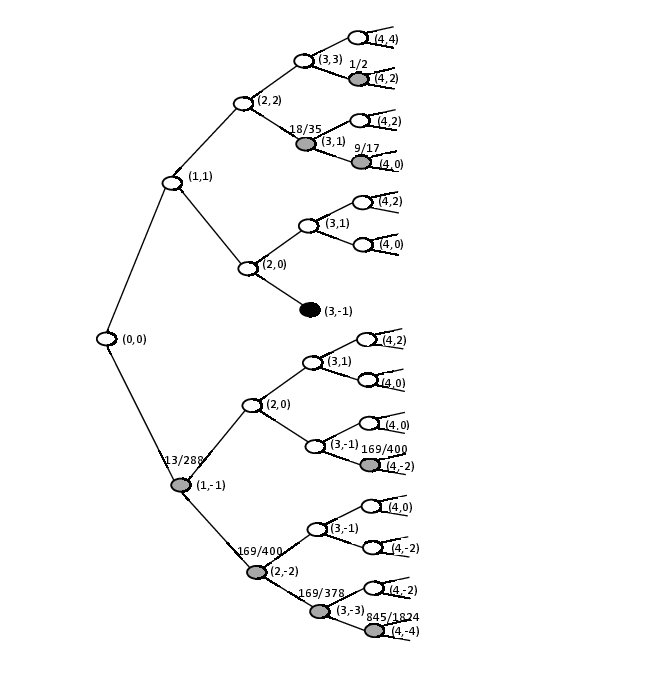}}
\end{minipage}
\caption{Randomized Markovian stopping time (left-panel) and randomized Az\'ema-Yor stopping (right panel) embedding probability measure \eqref{eq:DistributionExamplegeometric} with $q_+ = \gamma_+ = 5/12$ and $q_- = \gamma_- = 13/24$ into the random walk $\{S_t\}$. Black nodes stand for ``stop", white nodes stand for ``continue", and grey nodes stand for the cases in which a random coin is tossed and one stops if and only if the coin turns tails. Each node is marked on the right by a pair $(t,x)$ representing time $t$ and $S_t=x$. Each grey node is marked on the top by a number showing the probability that the random coin tossed at that node turns up tails.}\label{fi:PIAYMG}
\end{figure}


\end{document}